\newtheorem{definition}{Definition}
\newtheorem{lemma}{Lemma}
\newtheorem{proposition}{Proposition}
\newtheorem{theorem}{Theorem}
\numberwithin{equation}{section}
\begin{document}
\title[The Local Time of the Classical Risk Process]{The Local Time of the
Classical Risk Process${^*}$}
\author{F. Cortes }
\address{Universidad Aut\'{o}noma de Aguascalientes, Departamento de
Matem\'aticas y F\'{\i}sica, Av. Universidad 940, C.P. 20100 Aguascalientes,
Ags., Mexico}
\email{fcortes@correo.uaa.mx}
\author{J.A. Le\'{o}n}
\address{Cinvestav-IPN, Departamento de Control Autom\'{a}tico, Apartado
Postal 14-740, 07000 M\'exico D.F., Mexico}
\email{jleon@ctrl.cinvestav.mx}
\thanks{$^*$Partially supported by the CONACyT grant 45684-F, and by the UAA
grants PIM 05-3 and PIM 08-2}
\author{J. Villa}
\address{Universidad Aut\'{o}noma de Aguascalientes, Departamento de
Matem\'aticas y F\'{\i}sica, Av. Universidad 940, C.P. 20100 Aguascalientes,
Ags., Mexico}
\email{jvilla@correo.uaa.mx}
\date{December 26, 1997}
\subjclass[2000]{60J55, 91B30}
\keywords{Classical risk process, crossing process, local time, occupation
measure, Tanaka-like formula}

\begin{abstract}
In this paper we give an explicit expression for the local time of the
classical risk process and associate it with the density of an occupational
measure. To do so, we approximate the local time by a suitable sequence of
absolutely continuous random fields. Also, as an application, we analyze the
mean of the times $s \in [0,T]$ such that $0\leq X_{s} \leq
X_{s+\varepsilon} $ for some given $\varepsilon>0$.
\end{abstract}

\maketitle

\section{Introduction and main results}

Henceforth, $X=\{X_{t},t\geq 0\}$ represents the classical risk process.
More precisely, 
\begin{equation*}
X_{t}=x_{0}+ct-\sum_{k=1}^{N_{t}}R_{k},\ \ t\geq 0,
\end{equation*}
where $x_{0}\geq 0$ is the initial capital, $c>0$ is the premium income per
unit of time, $N=\left\{ N_{t},t\geq 0\right\}$ is an homogeneous Poisson
process with rate $\alpha $ and $\left\{ R_{k},k\in\mathbb{N}\right\}$ is a
sequence of i.i.d non-negative random variables, which is independent of $N$%
. $N_{t}$ is interpreted as the number of claims arrivals during time $t$\
and $R_{k}$ as the amount of the $k$-th claim. We suppose that $R_{1}$ has
finite mean and it is an absolutely continuous random variable with respect
to the Lebesgue measure.

The risk process has been studied extensively because it is often used to
describe the capital of an insurance company. Indeed, among the properties
of $X$ considered by several authors, we can metion that the local time of $X
$ has been analyzed by Kolkovska et al. \cite{T-J-J}, the double Laplace
transform of an occupation measure of $X$ has been obtained by Chiu and Yin 
\cite{C-Y}, or that the probability of ruin has been one of the most
important goals of the risk theory (see, for example, Asmussen \cite{A},
Grandell \cite{Grandell}, Rolski et al. \cite{R-S-S-T} and the references
therein to get an idea of the analysis realized in this subject). In this
paper we are interested in continuing the development of the local time $L$
of $X$ and its applications as an occupational density in order to improve
the understanding of $X$.

Note that $X$ is a L\'{e}vy process due to $\sum_{k=1}^{N}R_{k}$ being a
compound Poisson process. Thus, we can apply different criteria for general L%
\'{e}vy processes to guarantee the existence of $L$. For example, we can use
the Hawkes' result \cite{Hawkes} when $R_{k}$ is exponential distributed
(see also \cite{Bertoin} and references therein for related works). However,
we cannot obtain in general the form of $L$ via this results. Moreover, in
the literature there exist different characterizations of the local time
(see Fitzsimmons and Port \cite{F-P} and the references therein). For
instance, the local time have been introduced in \cite{F-P} (resp. \cite%
{T-J-J}) as an $L^{2}(\Omega )$-derivative (resp. derivative in probability)
of some occupation measure. Nevertheless, in \cite{F-P,T-J-J}, it is not
analyzed some properties of the involved local time using this
\textquotedblleft approximation of $L$\textquotedblright .

The purpose of this paper is to associate the local time of $X$ with the
crossing process when $L$ is interpreted as a density of the occupational
measure (see Theorem 1.c) below). The relation between the local time and
the crossing process was conjectured by L\'evy \cite{L} for the Brownian
motion case (i.e., whe $X$ is a Wiener process). In this article we use the
ideas of the proof of Tanaka's formula for the Brownian motion (see Chung 
\cite{Chung}, Chapter 7) to obtain a sequence of absolutely continuous
random fields (in time) that converges with probability 1 (w.p.1 for short)
to 
\begin{eqnarray}
L_{t}(x) &=&\frac{1}{c}(\frac{1}{2}1_{\{x\}}(X_{t})+1_{(x,\infty )}(X_{t})-%
\frac{1}{2}1_{\{x\}}(x_{0})-1_{(x,\infty )}(x_{0})  \notag \\
&&-\sum_{0<s\leq t}\{1_{(x,\infty )}(X_{s})-1_{(x,\infty )}(X_{s-})\}),\quad
t\ge 0 \text{\ and\ } x\in\mathbb{R}.  \label{tlrp1}
\end{eqnarray}
This approximation allows us to prove that this $L$ is the density of the
occupation measure (see (\ref{ocumea}) below) and, therefore, to deal with
some problems related to occupations measures.

Notice that $L$ given by (\ref{tlrp1}) is well-defined because $X$ is
c\`adl\`ag and 
\begin{equation}
P(N_{t}<+\infty ,\, \text{for all\ } t>0)=1,  \label{fsptp}
\end{equation}
wich imply that only a finite number of summands in (\ref{tlrp1}) are
different than zero.

In the following result we not only relate $L$ to the number of crossings
with certain level, but also to the occupation measure 
\begin{equation}
Y_{t}(A)=\int_{0}^{t}1_{A}(X_{s})ds,\quad t\geq 0\ \text{and}\ A\in \mathcal{%
B}(\mathbb{R}),  \label{ocumea}
\end{equation}%
where $\mathcal{B}(\mathbb{R})$ is the Borel $\sigma$-algrebra of $\mathbb{R}
$. Toward this end, we need the following:

\begin{definition}
\label{defcruza} We say that there exists a \textrm{crossing with the level $%
x\in \mathbb{R}$ at time $s\in (0,+\infty )$} if for all open interval $I$
such that $s\in I$, $x$ is an interior point of $\left\{ X_{t}:t\in
I\right\} $. That is, $x\in \left( X\left( I\right) \right) ^{\circ }.$
Moreover, the number of crossings with the level $x$ in the interval $(0,t)$
is denoted by $C_{t}(x)$. $C$ is known as the \textrm{crossing process of} $%
X $.
\end{definition}

\noindent Observe that if $x\in\mathbb{R}$ is a crossing point at time $s$,
then $X$ is continuous at time $s$ and $X_s=x$.

Now we can state the main result of the paper.

\begin{theorem}
\label{theop} Let $t>0$ and $x\in\mathbb{R}$. Then, the random field $L$
defined in (\ref{tlrp1}) has the following properties:

\begin{itemize}
\item[a)] $L_{t}(x)\geq 0$ and $L_{\cdot }(x)$ is not decreasing w.p.1.

\item[b)] $L_{t}(x)=\frac{1}{c}\left( \frac{1}{2}1_{\{X_{t}\}}(x)-\frac{1}{2}%
1_{\left\{ X_{0}\right\} }(x)+C_{t}(x)\right) $ w.p.1.

\item[c)] For every bounded and Borel measurable function $g:\mathbb{R}%
\mathbb{\rightarrow }\mathbb{R}$, we have 
\begin{equation}
\int_{0}^{t}g(X_{s})ds=\int_{\mathbb{R}}g(y)L_{t}(y)dy\quad w.p.1.
\label{dtl}
\end{equation}
\end{itemize}
\end{theorem}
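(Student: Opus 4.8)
The plan is to build the claimed approximating random fields exactly as in the Brownian motion proof of Tanaka's formula (Chung, Chapter 7), and then push each of the three properties through the limit. Concretely, I would fix a smooth, nonnegative, compactly supported approximate identity $\rho_n$ on $\mathbb{R}$ and set $f_n(y)=\int_{-\infty}^{y}\rho_n(z)\,dz$, so that $f_n\to \frac12 1_{\{x\}}(\cdot+x)+1_{(0,\infty)}(\cdot)$ pointwise after centering at $x$; equivalently work with $F_n(y)=\int_{-\infty}^{y}\rho_n(z-x)\,dz$. Applying the change-of-variables (Itô/Dellacherie-Meyer) formula for the semimartingale $X$ to the $C^1$ function $F_n$ gives
\begin{equation*}
F_n(X_t)=F_n(X_0)+c\int_0^t \rho_n(X_s-x)\,ds+\sum_{0<s\le t}\{F_n(X_s)-F_n(X_{s-})\},
\end{equation*}
since $X$ between jumps is just $x_0+ct$ (finite variation, continuous, drift $c$) and the jumps of $\sum_{k=1}^{N}R_k$ are summable. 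Rearranging,
\begin{equation*}
L^n_t(x):=\frac1c\int_0^t\rho_n(X_s-x)\,ds=\frac1c\Bigl(F_n(X_t)-F_n(X_0)-\sum_{0<s\le t}\{F_n(X_s)-F_n(X_{s-})\}\Bigr).
\end{equation*}
As $n\to\infty$, $F_n(X_t)\to \frac12 1_{\{x\}}(X_t)+1_{(x,\infty)}(X_t)$, similarly for $X_0=x_0$, and the (finitely many, by \eqref{fsptp}) jump terms converge termwise; hence $L^n_t(x)\to L_t(x)$ w.p.1 for each fixed $t,x$, with $L$ as in \eqref{tlrp1}. Monotone convergence in the integral form also shows the convergence holds, and that the left side $L^n_\cdot(x)$ is nondecreasing in $t$ for every $n$; this gives part (a), since a w.p.1 limit of nonnegative nondecreasing functions is nonnegative and nondecreasing (one should take the limit along a countable dense set of $t$ and use right-continuity of both sides, or observe $L^n_t(x)$ is continuous in $t$ and the limit identity holds for all $t$ simultaneously on one event).

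For part (c), I would again start from the integral representation: for bounded Borel $g$ and the approximate identity, $\int_0^t g(X_s)\,ds$ is approximated by $\int_0^t (g*\rho_n)(X_s)\,ds$, and Fubini converts $\int_0^t \rho_n(X_s-y)\,ds=c\,L^n_t(y)$ into $\int_{\mathbb{R}} g(y)\,c L^n_t(y)\,dy$ after the substitution; passing $n\to\infty$ and using part (a)/(b) to control $L^n_t(\cdot)$ (it is supported, up to $\rho_n$-smearing, in the bounded range of $X$ on $[0,t]$, which is a.s. bounded, with $\int_{\mathbb R} cL^n_t(y)\,dy=t$) lets me invoke dominated convergence. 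Alternatively, and perhaps cleaner, I can verify \eqref{dtl} directly for $g=1_{(x,\infty)}$: by \eqref{tlrp1}, $\int_{\mathbb R}1_{(x,\infty)}(y)L_t(y)\,dy=\int_{-\infty}^{X_t}\!\!-\int_{-\infty}^{x_0}\!\! -\sum_{0<s\le t}(\cdots)$, all evaluated with the appropriate $\frac1c$ weights, and the telescoping of the jump sum against the drift reproduces $\int_0^t 1_{(x,\infty)}(X_s)\,ds$ exactly (this is the "occupation time below a moving level between jumps" bookkeeping); then extend from indicators of half-lines to all bounded Borel $g$ by a monotone-class/linearity argument.

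Part (b) is where the real work lies: I must identify $C_t(x)$, the number of crossings of level $x$ in $(0,t)$ from Definition \ref{defcruza}, with $\sum_{0<s\le t}\{1_{(x,\infty)}(X_s)-1_{(x,\infty)}(X_{s-})\}$ plus the boundary indicator corrections, i.e. show that w.p.1
\begin{equation*}
\frac12 1_{\{X_t\}}(x)+1_{(x,\infty)}(X_t)-\frac12 1_{\{x_0\}}(x)-1_{(x,\infty)}(x_0)-\sum_{0<s\le t}\{1_{(x,\infty)}(X_s)-1_{(x,\infty)}(X_{s-})\}=\frac12 1_{\{X_t\}}(x)-\frac12 1_{\{X_0\}}(x)+C_t(x).
\end{equation*}
Equivalently, $C_t(x)=1_{(x,\infty)}(X_t)-1_{(x,\infty)}(x_0)-\sum_{0<s\le t}\{1_{(x,\infty)}(X_s)-1_{(x,\infty)}(X_{s-})\}$. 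Here I would argue pathwise on the full-measure event where $N_\cdot<\infty$, $R_1$'s law gives no jump landing exactly on $x$ in finite time (so a.s. $X_{s-}\ne x$ and $X_s\ne x$ at jump times, and also $x_0\ne x$ and $X_t\ne x$ except on a null set in $x$ for fixed path — care needed here, since the statement is "for fixed $t,x$, w.p.1"): between consecutive jump times the path is a continuous increasing line segment of slope $c$, which crosses level $x$ (in the sense of Definition \ref{defcruza}) exactly once if $x$ lies strictly between the endpoint values and not at all otherwise. Summing these upcrossings over the finitely many inter-jump intervals, and noting each downward jump that steps from above $x$ to below $x$ does \emph{not} create a crossing point (the path is discontinuous there, so $x\notin (X(I))^\circ$) but does decrement the running indicator $1_{(x,\infty)}(X_s)$, a careful telescoping of $1_{(x,\infty)}(X_s)-1_{(x,\infty)}(X_{s-})$ over the jump grid yields precisely the count of upward passages by the drift, which is $C_t(x)$. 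The main obstacle is doing this telescoping bookkeeping cleanly while handling the exceptional cases ($x_0=x$, $X_t=x$, a jump endpoint equal to $x$) and confirming they are covered by the $\frac12$-indicators or are null for the fixed $(t,x)$; I would isolate a couple of short lemmas — one stating that a.s. no jump of $X$ equals any deterministic level and the inter-jump segments are monotone of slope $c$, and one giving the crossing count of a single increasing segment — and then the identity for (b) reduces to finite arithmetic on the jump times.
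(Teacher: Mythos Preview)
Your plan is correct and tracks the paper's strategy closely for parts (a) and (b). The paper also approximates $\tfrac12 1_{\{x\}}+1_{(x,\infty)}$ by functions $\varphi_{x,n}$, applies the change-of-variables formula for finite-variation processes, and reads off nonnegativity and monotonicity from the integral representation $L_t(x)=\lim_n\int_0^t\varphi'_{x,n}(X_{s-})\,ds$; the only difference is that the paper takes $\varphi_{x,n}$ piecewise linear and then mollifies (a double limit $m\to\infty$, $n\to\infty$), whereas you start directly with a smooth $\rho_n$ and a single limit, which is a mild streamlining. For (b) the paper does exactly the pathwise bookkeeping you describe: rewrite the jump sum as $\sum_{0<s\le t}1_{(X_s,X_{s-})}(x)$ (using Lemma~\ref{cppax} to discard the events $X_{s}=x$ or $X_{s-}=x$ at jump times), fix a configuration such as $x_0>x$, $X_t<x$, $C_t(x)=n$, and count that between consecutive upcrossings there must be exactly one down-jump across $x$, so the telescoping gives $C_t(x)$.

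For part (c) your two proposed routes are both legitimate but differ from the paper's. The paper gives a completely elementary direct computation with no limit or monotone-class argument: on $\{N_t=k\}$ split $\int_0^t g(X_s)\,ds$ over the inter-jump intervals $(T_{i-1},T_i]$, substitute $x=X_{T_{i-1}}+c(s-T_{i-1})$ on each, and then telescope the resulting indicators using the signed-interval device $1_{\langle\langle a,b\rangle\rangle}:=1_{(-\infty,b]}-1_{(-\infty,a]}$ to recover exactly $\frac1c\bigl(1_{(-\infty,X_t]}-1_{(-\infty,X_0]}+\sum_i 1_{(X_{T_i},X_{T_i-}]}\bigr)(x)$, which equals $L_t(x)$ for a.e.\ $x$. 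This buys a clean identity for \emph{all} bounded Borel $g$ at once, with no approximation or $\pi$--$\lambda$ step; your approximation route would work too but needs the uniform control you mention ($\int cL^n_t(y)\,dy=t$) to justify dominated convergence.
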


Note that Statement b) implies that the number of crossings $C$ of $X$
introduced in Definition \ref{defcruza} satisfies 
\begin{equation*}
C_{t}(x)=1_{(-\infty ,X_{t})}(x)-1_{(-\infty ,X_{0})}(x)+\sum_{0<s\leq
t}1_{(X_{s},X_{s-})}(x)\quad w.p.1,
\end{equation*}%
for $t>0$ and $x\in\mathbb{R}$. Also note that, from (\ref{dtl}) and
Statement a), the random field $L$ can be interpreted as an occupation
density relative to the Lebesgue measure on $\mathbb{R}$. Hence, $L$ in (\ref%
{tlrp1}) is called \textit{the local time} and the expression%
\begin{eqnarray*}
L_{t}(x) &=&\frac{1}{c}(\frac{1}{2}1_{\{X_{t}\}}(x)-\frac{1}{2}1_{\left\{
X_{0}\right\} }(x)+1_{(-\infty ,X_{t})}(x)-1_{(-\infty ,X_{0})}(x) \\
&&+\int_{(0,t]}f(x,X_{s})dX_{s}),
\end{eqnarray*}%
is known as \textit{Tanaka-like formula for }$L_{t}(x)$. Here 
\begin{equation*}
f(x,X_{s})=\left\{ 
\begin{tabular}{ll}
$\frac{1_{(X_{s},X_{s-})}(x)}{\Delta X_{s}},$ & $\Delta X_{s}\neq 0,$ \\ 
$0,$ & $\Delta X_{s}=0.$%
\end{tabular}%
\right.
\end{equation*}

On the other hand, relation (\ref{dtl}) can be extended to some occupational
results. Indeed, as an example, we can state the following, which leads us
to get some average of the pathwise behavior of $X$.

\begin{theorem}
\label{teomed2}Let $g:\mathbb{R}\times \mathbb{R}\longrightarrow \mathbb{R}$
be a bounded and Borel measurable function. Then for each $\varepsilon >0,$%
\begin{equation}
E[\int_{0}^{t}g(X_{s},X_{s+\varepsilon }-X_{s})ds]=\int_{\mathbb{R}%
}E[g(x,X_{\varepsilon }-x_{0})]E[L_{t}(x)]dx.  \label{cm2}
\end{equation}
\end{theorem}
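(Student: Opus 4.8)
The plan is to reduce the two-variable occupation identity (\ref{cm2}) to the one-variable occupation-density formula (\ref{dtl}) of Theorem \ref{theop}, exploiting that $X$ is a L\'evy process, so its increments over disjoint time intervals are independent and stationary. First I would introduce $h(x):=E[g(x,X_{\varepsilon }-x_{0})]$; since $g$ is bounded and Borel measurable, a routine Fubini/monotone-class argument shows $h$ is bounded and Borel measurable with $\|h\|_{\infty }\le \|g\|_{\infty }$. Writing $\Delta _{s}:=X_{s+\varepsilon }-X_{s}$, the key point is that $\Delta _{s}$ is independent of the $\sigma $-algebra $\mathcal{F}_{s}^{X}$ generated by $\{X_{u}:u\le s\}$ and has the same law as $X_{\varepsilon }-x_{0}$; hence, by the ``freezing'' property of conditional expectations ($E[\varphi (U,V)\mid \mathcal{G}]=\psi (U)$ with $\psi (u)=E[\varphi (u,V)]$ when $U$ is $\mathcal{G}$-measurable and $V$ is independent of $\mathcal{G}$), one gets $E[g(X_{s},\Delta _{s})\mid \mathcal{F}_{s}^{X}]=h(X_{s})$ w.p.1, and therefore $E[g(X_{s},\Delta _{s})]=E[h(X_{s})]$ for each $s\in [0,t]$.

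Next, since the integrand is bounded, Tonelli's theorem lets me exchange expectation and time integral:
\[
E\Bigl[\int_{0}^{t}g(X_{s},X_{s+\varepsilon }-X_{s})\,ds\Bigr]
=\int_{0}^{t}E[h(X_{s})]\,ds
=E\Bigl[\int_{0}^{t}h(X_{s})\,ds\Bigr].
\]
Now Theorem \ref{theop} c), applied to the bounded Borel function $h$, gives $\int_{0}^{t}h(X_{s})\,ds=\int_{\mathbb{R}}h(y)L_{t}(y)\,dy$ w.p.1. Taking expectations and applying Fubini's theorem once more --- legitimate because, by (\ref{dtl}) with $g\equiv 1$ together with $L_{t}\ge 0$ from Theorem \ref{theop} a),
\[
\int_{\mathbb{R}}E\bigl[|h(y)|\,L_{t}(y)\bigr]\,dy
\le \|h\|_{\infty }\,E\Bigl[\int_{\mathbb{R}}L_{t}(y)\,dy\Bigr]
=\|h\|_{\infty }\,t<\infty
\]
--- yields $E[\int_{0}^{t}h(X_{s})\,ds]=\int_{\mathbb{R}}h(y)E[L_{t}(y)]\,dy$, using that $h(y)$ is deterministic. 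Substituting back $h(y)=E[g(y,X_{\varepsilon }-x_{0})]$ gives (\ref{cm2}).

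The main obstacle --- essentially the only step that is not bookkeeping --- is the conditioning identity $E[g(X_{s},\Delta _{s})\mid \mathcal{F}_{s}^{X}]=h(X_{s})$. It rests on identifying $\Delta _{s}=c\varepsilon -\sum_{k=N_{s}+1}^{N_{s+\varepsilon }}R_{k}$ as independent of $\mathcal{F}_{s}^{X}$ and equal in distribution to $X_{\varepsilon }-x_{0}=c\varepsilon -\sum_{k=1}^{N_{\varepsilon }}R_{k}$, which follows from the stationary-independent-increments property of the compound Poisson (hence L\'evy) process $X$, and then invoking the freezing lemma quoted above. The remaining work --- measurability and boundedness of $h$, and the two Fubini/Tonelli interchanges --- is routine given the uniform bound on $g$ and the integrability of $L_{t}$ in (\ref{dtl}). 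One should also note that the w.p.1 exceptional set in Theorem \ref{theop} c) may depend on the test function, but here it is applied to the single fixed function $h$, so no difficulty arises.
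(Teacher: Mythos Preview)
Your proof is correct and rests on the same two ingredients as the paper's: the independence and stationarity of the increment $\Delta_s = X_{s+\varepsilon} - X_s$ relative to $\mathcal{F}_s^X$, and the occupation-density formula (\ref{dtl}). The paper, however, organizes the argument through the Dynkin monotone class theorem: it first verifies (\ref{cm2}) for product indicators $g(x,y)=1_A(x)1_B(y)$ --- where your freezing step reduces to the plain independence $E[1_A(X_s)1_B(\Delta_s)]=E[1_A(X_s)]\,E[1_B(\Delta_s)]$ --- and then extends to all bounded measurable $g$ by linearity and monotone limits. Your route is more direct, invoking the freezing lemma $E[g(U,V)\mid\mathcal{G}]=h(U)$ as a black box for general $g$; the monotone-class work is still present, but it is hidden inside the proof of that lemma (and inside your claim that $h$ is Borel). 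Both arguments are equally rigorous; yours is shorter, while the paper's makes the reduction to the elementary product case explicit.
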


An application of this theorem is to answer the question: \textit{What is
the average in time that the capital of an insurance company is positive,
and bigger than itself after twelve months?.}

The paper is organized as follows. In Section \ref{sec:2} we provide the
tool needed to prove Theorem \ref{theop}. In particular, we approximate the
local time by a sequence of suitable random fields. The proof of Theorem \ref%
{theop} is given in Section \ref{sec:3}. Finally, in Section \ref{sec:4}, we
show Theorem \ref{teomed2} and answer the above question in the case that
the claim $R_{1}$ has exponential distribution.

\section{Main tool}

\label{sec:2} In this section we provide the needed tool to show that
Theorem \ref{theop} holds. In particular, we construct the announced
sequence converging to the local time $L$.

In the remaining of this paper, $T_i$ denotes the $i$-th jump time of $N$,
with $T_0=0$. It is known that $T_{i}$ has gamma distribution with
parameters $(i,\alpha )$, $i\geq 1.$

We will use the following technical resul in the proofs of this section.

\begin{lemma}
\label{cppax} Let $x\in \mathbb{R}$, $s>0$, $\Omega _{1}(s)=\left\{ \Delta
X_{s}\neq 0\right\}$, and 
\begin{eqnarray*}
\Omega _{2}&=&\{X_{s-}=x,\ \Delta X_{s}\neq 0\ \text{for some }s>0\} \\
&&\cup \{X_{s}=x,\ \Delta X_{s}\neq 0\ \text{for some }s>0\}.
\end{eqnarray*}
Then, $P(\Omega _{1}(s))=0$ and $P(\Omega _{2})=0$.
\end{lemma}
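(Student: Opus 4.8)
The plan is to prove the two statements separately, using the fact that the jumps of $X$ coincide with those of the compound Poisson process $\sum_{k=1}^{N}R_k$, so that $\Delta X_s\neq 0$ happens only at the jump times $T_1,T_2,\dots$ and the jump sizes are $-R_1,-R_2,\dots$, which are absolutely continuous with respect to Lebesgue measure.

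\textit{Step 1: $P(\Omega_1(s))=0$ for fixed $s>0$.} Since $\Omega_1(s)=\{\Delta X_s\neq 0\}=\{s=T_i\text{ for some }i\ge 1\}$, I would condition on the jump times. For the first jump, $T_1$ is exponential with parameter $\alpha$, hence $P(T_1=s)=0$; more generally each $T_i$ has a gamma$(i,\alpha)$ density and thus $P(T_i=s)=0$. By $(\ref{fsptp})$ there are a.s. only finitely many jumps in $[0,s]$, so $P(\Omega_1(s))\le\sum_{i\ge1}P(T_i=s)=0$ by countable subadditivity. (Alternatively, this is immediate from the fact that a compound Poisson process, being a Lévy process, is a.s. continuous at any fixed time.)

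\textit{Step 2: $P(\Omega_2)=0$.} This is the main point. Write $\Omega_2=A_x\cup B_x$ where $A_x=\{X_{s-}=x,\ \Delta X_s\neq0\text{ for some }s>0\}$ and $B_x=\{X_s=x,\ \Delta X_s\neq0\text{ for some }s>0\}$. On $A_x$ there is some $i\ge1$ with $X_{T_i-}=x$, i.e. $x_0+cT_i-\sum_{k=1}^{i-1}R_k=x$; on $B_x$ there is some $i\ge1$ with $x_0+cT_i-\sum_{k=1}^{i}R_k=x$. I would bound $P(A_x)\le\sum_{i\ge1}P(x_0+cT_i-\sum_{k=1}^{i-1}R_k=x)$ and $P(B_x)\le\sum_{i\ge1}P(x_0+cT_i-\sum_{k=1}^{i}R_k=x)$, which is legitimate by $(\ref{fsptp})$ (a.s. finitely many jumps, so the union over $i$ can be taken as a countable union, and null). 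For each fixed $i$, $T_i$ is absolutely continuous (gamma$(i,\alpha)$) and independent of $(R_1,\dots,R_i)$; hence $x_0+cT_i-\sum_{k=1}^{i-1}R_k$ is absolutely continuous — conditioning on $(R_1,\dots,R_{i-1})$, it is an affine image of the absolutely continuous variable $T_i$ with $c>0$ — so it hits the single point $x$ with probability $0$. The same argument handles $B_x$. Therefore $P(A_x)=P(B_x)=0$ and $P(\Omega_2)=0$.

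\textit{Main obstacle.} The only delicate point is the passage from "for each $i$ the bad event has probability zero" to "$P(\Omega_2)=0$": a priori the union over all $s>0$ is uncountable. The resolution is exactly $(\ref{fsptp})$: almost surely only finitely many $s$ have $\Delta X_s\neq 0$, and those $s$ are precisely $T_1,T_2,\dots$, so the uncountable union collapses to the countable union $\bigcup_{i\ge1}\{\cdots\text{ at }s=T_i\}$, and countable subadditivity finishes the proof. Everything else is the routine observation that an absolutely continuous random variable, and any affine image of it with nonzero slope, assigns zero mass to a point.
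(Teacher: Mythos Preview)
Your argument is correct and follows the same broad strategy as the paper --- reduce the uncountable union over $s>0$ to a countable union over the jump times $T_i$, and then show that $X_{T_i-}$ and $X_{T_i}$ are absolutely continuous --- but your execution is more direct. The paper first truncates to a finite interval $[0,\nu]$, conditions on $\{N_\nu=k\}$, and then handles $P(X_{T_j-}=x\mid N_\nu=k)$ by a recursive conditioning on $X_{T_{j-1}-}$, invoking the absolute continuity of the claim size $R_{j-1}$ at each step. You instead work unconditionally, use the countable union $\bigcup_{i\ge 1}\{X_{T_i-}=x\}$ directly, and exploit only that $T_i$ has a gamma density independent of $(R_1,\dots,R_i)$. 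This avoids the truncation and the conditioning on $N_\nu$, and --- as a small bonus --- your proof of $P(\Omega_2)=0$ does not actually use the hypothesis that $R_1$ is absolutely continuous, whereas the paper's does. Both routes are sound; yours is simply leaner.
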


\begin{proof}
By the law of total probability%
\begin{equation*}
P(\Omega _{1}(s))=\sum_{k=0}^{\infty }P(N_{s}=k)P(\Omega _{1}(s)|N_{s}=k).
\end{equation*}%
Notice that%
\begin{equation*}
P(\Omega _{1}(s)|N_{s}=k)=P(\Delta X_{s}\neq 0|N_{s}=k)=P(T_{k}=s)=0.
\end{equation*}

On the other hand, let $\nu \in \mathbb{N}$ and define

\begin{eqnarray*}
\tilde{\Omega}_{\nu }&=&\{X_{s-}=x,\ \Delta X_{s}\neq 0\ \text{for some }%
0<s<\nu \} \\
& &\cup \{X_{s}=x,\ \Delta X_{s}\neq 0\ \text{for some }0<s<\nu \}.
\end{eqnarray*}

For $k=0,$ 
\begin{equation*}
P(\tilde{\Omega}_{\nu }|N_{\nu }=0)=P(\emptyset |N_{\nu }=0)=0,
\end{equation*}%
and for $k\geq 1$,%
\begin{eqnarray*}
P(\tilde{\Omega}_{\nu }|N_{\nu }=k) &\leq &P(X_{T_{j}-}=x\text{ for some }%
j\in \left\{ 1,...,k\right\} |N_{\nu }=k) \\
&&+P(X_{T_{j}}=x\text{ for some }j\in \left\{ 1,...,k\right\} |N_{\nu }=k) \\
&\leq &\sum_{j=1}^{k}(P(X_{T_{j}-}=x|N_{\nu }=k)+P(X_{T_{j}}=x|N_{\nu }=k)).
\end{eqnarray*}%
For $j=1$ we get 
\begin{eqnarray*}
P(X_{T_{1}-} =x|N_{\nu }=k)&=&P(T_{1}=(x-x_{0})c^{-1}|N_{\nu }=k)=0, \\
P(X_{T_{1}} =x|N_{\nu }=k)&=&P(cT_{1}-R_{1}=x-x_{0}|N_{\nu }=k)=0,
\end{eqnarray*}%
this is because $T_{1}$ and $R_{1}$ are independent and absolutely
continuous random variables. Let $P(\cdot |N_{\nu }=k)=P^{\ast }(\cdot )$.
When $j>1$ we have 
\begin{eqnarray*}
P^{\ast }(X_{T_{j}-}=x) &=&\int_{\mathbb{R}}P^{\ast
}(X_{T_{j}-}=x|X_{T_{j-1}-}=y)P^{\ast }(X_{T_{j-1}-}\in dy) \\
&=&\int_{\mathbb{R}}P^{\ast }(R_{j-1}=y-(x-(T_{j}-T_{j-1})c))P^{\ast
}(X_{T_{j-1}-}\in dy) \\
&=&0
\end{eqnarray*}%
and%
\begin{eqnarray*}
P^{\ast }(X_{T_{j}}=x) &=&\int_{\mathbb{R}}P^{\ast
}(X_{T_{j}}=x|X_{T_{j-1}}=y)P^{\ast }(X_{T_{j-1}}\in dy) \\
&=&\int_{\mathbb{R}}P^{\ast }((T_{j}-T_{j-1})c-R_{j}=x-y)P^{\ast
}(X_{T_{j-1}}\in dy) \\
&=&0.
\end{eqnarray*}%
Here we have used the fact that $R_{j-1}$ has an absolutely continuous
distribution. Finally notice that $P(\Omega _{2})\leq \sum {}_{\nu
=1}^{\infty }P(\tilde{\Omega}_{\nu })=0.$
\end{proof}

\subsection{An approximating sequence of the local time}

Now we approximate the local time $L$ by a sequence of suitable random
fields, which allows us to see that Theorem \ref{theop}.a) is true. Toward
this end, let $x\in \mathbb{R}$ arbitrary and fixed. For each $n\in \mathbb{N%
}$\ define\ $\varphi _{x,n}:\mathbb{R}\rightarrow \mathbb{R}$ by 
\begin{equation*}
\varphi _{x,n}(y)=\left\{ 
\begin{tabular}{ll}
$0,$ & $y<x-1/n,$ \\ 
$(n(y-x)+1)/2,$ & $x-1/n\leq y\leq x+1/n,$ \\ 
$1,$ & $x+1/n<y\text{.}$%
\end{tabular}%
\right.
\end{equation*}

Notice that 
\begin{eqnarray}
\lim_{n\rightarrow \infty }\varphi _{x,n}(y) &=&\left\{ 
\begin{tabular}{ll}
$0,$ & $y<x,$ \\ 
$1/2,$ & $y=x,$ \\ 
$1,$ & $y>x,$%
\end{tabular}%
\right.  \notag \\
&=&\frac{1}{2}1_{\{x\}}(y)+1_{(x,+\infty )}(y),  \label{cfn}
\end{eqnarray}%
and 
\begin{equation*}
\varphi _{x,n}^{\prime }(y)=\left\{ 
\begin{tabular}{ll}
$0,$ & $y<x-1/n,$ \\ 
$n/2,$ & $x-1/n<y<x+1/n,$ \\ 
$0,$ & $x+1/n<y.$%
\end{tabular}%
\right.
\end{equation*}%
For each $n\in \mathbb{N}$, we define the random field%
\begin{eqnarray*}
L_{t}^{n}(x) &=&\frac{1}{c}\left( \varphi _{x,n}(X_{t})-\varphi
_{x,n}(X_{0})\right. \\
&&-\sum_{s\leq t}\{\varphi _{x,n}(X_{s})-\varphi _{x,n}(X_{s-})-\varphi
_{x,n}^{\prime }(X_{s-})\Delta X_{s}\}),
\end{eqnarray*}%
where $\Delta X_{t}=X_{t}-X_{t-}$. As in (\ref{tlrp1}), we have by (\ref%
{fsptp}) that $L^{n}$ is well-defined.

Before proving that $\{L^n, n\in\mathbb{N}\}$ is the sequence that we are
looking for, we need to approximate the fuction $\varphi_{x,n}$ by a
sequence of smooth functions. To do so, set 
\begin{eqnarray*}
\Omega ^{\prime } &=&(\left\{ X_{s-}=x\pm 1/m\neq X_{s},\text{ for some }%
s>0,\ m\in \mathbb{N}\right\} \\
&&\cup \{N_{s}<+\infty, \ \text{ for all }\ s>0\}^{c}\cup \Omega _{2})^{c}.
\end{eqnarray*}%
Since, by Lemma \ref{cppax}, 
\begin{eqnarray*}
P(X_{s-} &=&x\pm 1/m\neq X_{s},\ \text{for some }s>0,\ m\in \mathbb{N}) \\
&\leq &\sum_{m=1}^{\infty }P(X_{s-}=x\pm 1/m\neq X_{s},\ \text{for some }%
s>0) =0,
\end{eqnarray*}%
we have $P(\Omega ^{\prime })=1.$

Let $\psi :\mathbb{R}\rightarrow \mathbb{R}$ a symmetric function in $%
\mathcal{C}^{\infty }(\mathbb{R})$ with compact support on $[-1,1]$ and 
\begin{equation*}
\int_{-1}^{1}\psi (y)dy=1.
\end{equation*}%
Define the sequence $\left( \psi _{m}\right) $ by 
\begin{equation*}
\psi _{m}(y)=m\psi (my),\quad y\in \mathbb{R},
\end{equation*}%
and 
\begin{equation*}
\varphi _{x,n}^{m}(y)=(\psi _{m}\ast \varphi _{x,n})(y)=\int_{\mathbb{R}%
}\varphi _{x,n}(y-z)\psi _{m}(z)dz.
\end{equation*}%
Since $\psi _{m}\in \mathcal{C}^{\infty }(\mathbb{R})$, then $\varphi
_{x,n}^{m}\in \mathcal{C}^{\infty }(\mathbb{R})$ and moreover 
\begin{eqnarray}
&&\left( \varphi _{x,n}^{m}\right) _{m}\text{ converges uniformly on
compacts to }\varphi _{x,n},  \label{c1} \\
&&((\varphi _{x,n}^{m})^{\prime })_{m}\text{ converges pointwise, except on }%
x\pm 1/n\text{, to }\left( \varphi _{x,n}\right) ^{\prime }.  \label{c2}
\end{eqnarray}

Let us use the notation 
\begin{equation*}
L_{t}^{n,m}(x)=\frac{1}{c}\int_{(0,t]}(\varphi _{x,n}^{m})^{\prime
}(X_{s-})dX_{s}.
\end{equation*}%
Then, by the change of variable theorem for the Lebesgue-Stieltjes integral,
we have 
\begin{eqnarray}
cL_{t}^{n,m}(x) &=&\varphi _{x,n}^{m}(X_{t})-\varphi _{x,n}^{m}(x_{0}) 
\notag \\
&&-\sum_{0<s\leq t}\left\{ \varphi _{x,n}^{m}(X_{s})-\varphi
_{x,n}^{m}(X_{s-})-(\varphi _{x,n}^{m})^{\prime }(X_{s-})\Delta
X_{s}\right\} .  \label{appfcv}
\end{eqnarray}

Now we can give the relation between $L^{n}$ and $\{L^{n,m},m\in \mathbb{N}\}
$.

\begin{proposition}
\label{capptlapx} Let $n\in \mathbb{N}$. Then, 
\begin{equation*}
\lim_{m\rightarrow \infty }L_{t}^{n,m}(x)=L_{t}^{n}(x),\quad \text{for all}\
t>0,\ \mathit{w.p.1}.
\end{equation*}
\end{proposition}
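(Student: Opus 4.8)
The plan is to fix $n$ and argue pathwise on the full-measure event $\Omega'$, passing to the limit $m\to\infty$ directly in the explicit formula (\ref{appfcv}) for $cL^{n,m}_t(x)$, term by term. The gain from working on $\Omega'$ is that every ingredient of (\ref{appfcv}) converges to the matching ingredient of the defining expression for $cL^n_t(x)$, while the sum has only finitely many nonzero terms, so no uniform bound over $s$ is needed.

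So, fix $\omega\in\Omega'$ (recall $P(\Omega')=1$) and $t>0$. First I would handle the boundary terms: $\varphi_{x,n}$ is continuous, so (\ref{c1}) gives $\varphi^m_{x,n}\to\varphi_{x,n}$ pointwise, whence $\varphi^m_{x,n}(X_t(\omega))\to\varphi_{x,n}(X_t(\omega))$ and $\varphi^m_{x,n}(x_0)\to\varphi_{x,n}(x_0)$. Next, on $\Omega'$ we have $N_t(\omega)<\infty$, so only finitely many $s\in(0,t]$ satisfy $\Delta X_s(\omega)\neq 0$; for every other $s$ the $s$-th summand of both $cL^{n,m}_t(x)$ and $cL^n_t(x)$ vanishes, since then $X_s=X_{s-}$ and $\Delta X_s=0$. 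Hence each of the two sums reduces to a finite sum indexed by the jump times $s\in(0,t]$.

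The crux is the termwise convergence at a fixed jump time $s\le t$. Continuity of $\varphi_{x,n}$ together with (\ref{c1}) gives $\varphi^m_{x,n}(X_s(\omega))\to\varphi_{x,n}(X_s(\omega))$ and $\varphi^m_{x,n}(X_{s-}(\omega))\to\varphi_{x,n}(X_{s-}(\omega))$. For the derivative term I would use that $\omega\in\Omega'$ forces $X_{s-}(\omega)\notin\{x-1/n,\,x+1/n\}$ at every jump time --- this is exactly the event deleted in the construction of $\Omega'$ --- so (\ref{c2}) applies and $(\varphi^m_{x,n})'(X_{s-}(\omega))\to\varphi'_{x,n}(X_{s-}(\omega))$; multiplying by the fixed scalar $\Delta X_s(\omega)$ preserves the convergence. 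Summing the finitely many terms and combining with the boundary terms yields $cL^{n,m}_t(x)(\omega)\to cL^n_t(x)(\omega)$, and dividing by $c>0$ finishes the proof.

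The only genuine subtlety I anticipate is that derivative term: $(\varphi^m_{x,n})'$ converges to $\varphi'_{x,n}$ only away from the corners $x\pm 1/n$ (at those two points the symmetric mollification converges to the midpoint value $n/4$, not to a one-sided limit). This is precisely why the event $\{X_{s-}=x\pm 1/m\neq X_s\text{ for some }s>0,\ m\in\mathbb{N}\}$ was removed --- via Lemma \ref{cppax} --- in the definition of $\Omega'$, and invoking that exclusion at the jump times is the heart of the matter; everything else is routine bookkeeping with pointwise limits of finitely many continuous evaluations.
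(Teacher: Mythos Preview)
Your proposal is correct and follows essentially the same route as the paper's own proof: work on the full-measure set $\Omega'$, use (\ref{c1}) and the finiteness of jumps (via (\ref{fsptp})) to pass to the limit in the $\varphi^m_{x,n}$-terms, and use the exclusion built into $\Omega'$ together with (\ref{c2}) to handle the derivative term at jump times. The paper presents exactly this argument, with the same identification of the derivative-at-corners issue as the only nontrivial point.
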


\begin{proof}
For $\omega \in \Omega ^{\prime }$ we have that (\ref{fsptp}) and (\ref{c1})
imply 
\begin{multline*}
\lim_{m\rightarrow \infty }(\varphi _{x,n}^{m}(X_{t})-\varphi
_{x,n}^{m}(x_{0})-\sum_{0<s\leq t}\left\{ \varphi _{x,n}^{m}(X_{s})-\varphi
_{x,n}^{m}(X_{s-})\right\} ) \\
=\varphi _{x,n}(X_{t})-\varphi _{x,n}(x_{0})-\sum_{0<s\leq t}\left\{ \varphi
_{x,n}(X_{s})-\varphi _{x,n}(X_{s-})\right\} .
\end{multline*}%
Now we analyze the remaining term in the definition of $L_{t}^{n,m}(x).$
Notice that for each $w\in \Omega ^{\prime }$ we have 
\begin{equation*}
X_{s-}(w)\neq x\pm 1/k,\quad k\in \mathbb{N}.
\end{equation*}%
Therefore, from $(\ref{c2}),$%
\begin{equation*}
\lim_{m\rightarrow \infty }\sum_{0<s\leq t}(\varphi _{x,n}^{m})^{\prime
}(X_{s-})\Delta X_{s}=\sum_{0<s\leq t}(\varphi _{x,n})^{\prime
}(X_{s-})\Delta X_{s}.
\end{equation*}%
From this and (\ref{appfcv}) the result follows.
\end{proof}

Now we are ready to state the properties of $\{L^n,n\in\mathbb{N}\}$ that we
use in Section \ref{sec:3}.

\begin{proposition}
\label{aptlft}The sequence $\{L^{n},n\in \mathbb{N}\}$ fulfill:

\begin{itemize}
\item[a)] $L_{t}^{n}(x)=\int_{0}^{t}\varphi _{x,n}^{\prime }(X_{s-})ds+\frac{%
1}{c}\sum_{0<s\leq t}\varphi _{x,n}^{\prime }(X_{s-})\Delta X_{s}$, for all $%
n\in \mathbb{N}$ and $t>0$, w.p.1.

\item[b)] $\lim_{n\rightarrow \infty }L_{t}^{n}(x)=L_{t}(x)$, for all $t>0$,
w.p.1.
\end{itemize}
\end{proposition}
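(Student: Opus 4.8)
The plan is to prove (a) by passing to the limit $m\to\infty$ in the Lebesgue--Stieltjes identity behind $L^{n,m}$, and then to prove (b) by passing to the limit $n\to\infty$ in the defining formula for $L^{n}$, using the pointwise limit (\ref{cfn}) and Lemma \ref{cppax} to handle the jump terms.

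For (a): I start from $L^{n,m}_t(x)=\frac1c\int_{(0,t]}(\varphi^{m}_{x,n})'(X_{s-})\,dX_s$ and split the integrator along the decomposition $X_s=(x_0+cs)+\bigl(-\sum_{k=1}^{N_s}R_k\bigr)$ into its continuous part of slope $c$ and its pure-jump part. Linearity of the Stieltjes integral then gives
\[
L^{n,m}_t(x)=\int_0^t(\varphi^{m}_{x,n})'(X_{s-})\,ds+\frac1c\sum_{0<s\le t}(\varphi^{m}_{x,n})'(X_{s-})\,\Delta X_s,
\]
since the jumps of $\sum_{k}R_k$ are exactly the $-\Delta X_s$. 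Now let $m\to\infty$ on $\Omega'$: the left side tends to $L^{n}_t(x)$ by Proposition \ref{capptlapx}; in the (finite) jump sum, $X_{s-}\ne x\pm1/n$ at every jump, so (\ref{c2}) yields termwise convergence to $\sum_{0<s\le t}\varphi_{x,n}'(X_{s-})\Delta X_s$; and for the integral, $\|(\varphi^{m}_{x,n})'\|_\infty\le n/2$ uniformly in $m$, while the set $\{s\in[0,t]:X_{s-}\in\{x\pm1/n\}\}$ is finite (between consecutive jumps $X$ rises linearly with slope $c$, and on $\Omega'$ there are only finitely many jumps), hence Lebesgue-null, so dominated convergence applies. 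This gives (a).

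For (b): writing
\[
cL^{n}_t(x)=\varphi_{x,n}(X_t)-\varphi_{x,n}(x_0)-\sum_{0<s\le t}\{\varphi_{x,n}(X_s)-\varphi_{x,n}(X_{s-})-\varphi_{x,n}'(X_{s-})\Delta X_s\}
\]
and letting $n\to\infty$ on $\Omega'$, the first two terms converge by (\ref{cfn}) to the corresponding indicator expressions in (\ref{tlrp1}). In the finite jump sum, at each jump time $s$ we have $X_{s-}\ne x$ and $X_s\ne x$ because $P(\Omega_2)=0$ (Lemma \ref{cppax}); hence $\varphi_{x,n}'(X_{s-})=0$ for $n$ large, and by (\ref{cfn}) $\varphi_{x,n}(X_s)-\varphi_{x,n}(X_{s-})\to 1_{(x,\infty)}(X_s)-1_{(x,\infty)}(X_{s-})$. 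Collecting the limits reproduces $cL_t(x)$; since the computation is pathwise and simultaneously valid for every $t>0$, this is the claimed w.p.1 statement.

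The step requiring genuine care is the interchange of limit and Lebesgue integral $\int_0^t(\varphi^{m}_{x,n})'(X_{s-})\,ds$ in part (a): one must verify that the times at which $X_{s-}$ hits the two kink levels $x\pm1/n$ form a null set, and this is precisely where the piecewise-linear structure of the risk process (finitely many jumps, constant positive slope between them) is essential. The remainder is bookkeeping with the two explicit limits (\ref{cfn}), (\ref{c2}) and the almost sure finiteness of the number of jumps on bounded intervals.
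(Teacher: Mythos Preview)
Your proof is correct and follows essentially the same approach as the paper's: split the Stieltjes integral defining $L^{n,m}$ into its drift and jump parts, pass $m\to\infty$ with dominated convergence (using that $X_{s-}\in\{x\pm 1/n\}$ only at finitely many times) to obtain (a), and then pass $n\to\infty$ termwise in the definition of $L^{n}$ using (\ref{cfn}) together with the fact that on $\Omega'$ neither $X_{s-}$ nor $X_s$ equals $x$ at a jump. The only cosmetic difference is that the paper splits $\int_{(0,t]}(\varphi^{m}_{x,n})'(X_{s-})\,dX_s$ along the jump intervals $(T_{i-1},T_i]$ before separating drift from jumps, whereas you decompose the integrator $X$ globally; your route is slightly more direct. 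One tiny caveat: your uniform bound $\|(\varphi^{m}_{x,n})'\|_\infty\le n/2$ uses $\psi\ge 0$, which the paper does not state explicitly; in general the bound is $(n/2)\|\psi\|_{L^1}$, which is equally sufficient for dominated convergence.
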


\begin{proof}
We first deal with Statement a). Fix $t\geq 0$ and let $\omega \in \Omega
^{\prime }\cap \Omega _{1}(t)^{c}$. Then there is $k\in \mathbb{N}$ such
that $N_{t}(\omega )=k$. Thus 
\begin{align*}
& \int_{0}^{t}(\varphi _{x,n}^{m})^{\prime }(X_{s-})dX_{s} \\
& =\sum_{i=1}^{k}\int_{(T_{i-1},T_{i}]}(\varphi _{x,n}^{m})^{\prime
}(X_{s-})dX_{s}+\int_{(T_{k},t]}(\varphi _{x,n}^{m})^{\prime }(X_{s-})dX_{s}
\\
& =\sum_{i=1}^{k}\int_{(T_{i-1},T_{i}]}\left( \varphi _{x,n}^{m}\right)
^{\prime }(X_{s-})cds+\int_{[T_{k},t)}\left( \varphi _{x,n}^{m}\right)
^{\prime }(X_{s})cds \\
& \ \ \ +\sum_{i=1}^{k}\left( \varphi _{x,n}^{m}\right) ^{\prime
}(X_{T_{i}-})(X_{T_{i}}-X_{T_{i}-}) \\
& =c\int_{0}^{t}(\varphi _{x,n}^{m})^{\prime
}(X_{s-})ds+\sum_{i=1}^{k}\left( \varphi _{x,n}^{m}\right) ^{\prime
}(X_{T_{i}-})(X_{T_{i}}-X_{T_{i}-}).
\end{align*}%
Notice that on each $(T_{i-1},T_{i}]$ and $(T_{k},t]$, there is at most one $%
s$ such that $X_{s-}=x\pm 1/n$. Hence, by $(\ref{c2})$, we have 
\begin{equation*}
\lim_{m\rightarrow \infty }(\varphi _{x,n}^{m})^{\prime }(X_{s-})=(\varphi
_{x,n})^{\prime }(X_{s-}),\ \ \lambda \text{-}a.s.
\end{equation*}%
Therefore, by the dominated convergence theorem, we deduce 
\begin{eqnarray*}
\lim_{m\rightarrow \infty }\int_{0}^{t}(\varphi _{x,n}^{m})^{\prime
}(X_{s-})dX_{s} &=&c\int_{0}^{t}(\varphi _{x,n})^{\prime }(X_{s-})ds \\
&&+\sum_{i=1}^{k}\left( \varphi _{x,n}\right) ^{\prime }(X_{T_{i}-})\Delta
X_{T_{i}},\ \ a.s.
\end{eqnarray*}%
Consequently, the fact that $L_{t}^{n}(x)$ and the right-hand side of last
equality are c\`{a}dl\`{a}g processes implies that Statement a) holds.

Now we consider Statement b) in order to finish the proof of the
proposition. Let $\omega \in \Omega ^{\prime }$ and $t\geq 0$. Then there
exist $k\in \mathbb{N}$ such that $N_{t}(\omega )=k$. Hence%
\begin{multline*}
\lim_{n\rightarrow \infty }(\varphi _{x,n}(X_{t}(\omega ))-\varphi
_{x,n}(x_{0})-\sum_{i=1}^{k}\left\{ \varphi _{x,n}(X_{T_{i}}(\omega
))-\varphi _{x,n}(X_{T_{i}-}(\omega ))\right\} ) \\
=\frac{1}{2}1_{\left\{ x\right\} }(X_{t}(\omega ))+1_{(x,\infty
)}(X_{t}(\omega ))-\frac{1}{2}1_{\left\{ x\right\} }(x_{0})-1_{\left(
x,\infty \right) }\left( x_{0}\right) \\
-\sum_{i=1}^{k}\{\frac{1}{2}1_{\left\{ x\right\} }(X_{T_{i}}(\omega
))+1_{\left( x,\infty \right) }\left( X_{T_{i}}\left( \omega \right) \right)
\\
-\frac{1}{2}1_{\left\{ x\right\} }\left( X_{T_{i}-}\left( \omega \right)
\right) -1_{\left( x,\infty \right) }\left( X_{T_{i}-}\left( \omega \right)
\right) \}.
\end{multline*}%
On the other hand 
\begin{equation*}
\omega \in \left\{ X_{s-}=x\neq X_{s},\text{ for some }s>0\right\} ^{c}
\end{equation*}%
implies 
\begin{equation*}
X_{T_{i}-}\left( w\right) \neq x,\quad i=0,1,...,k.
\end{equation*}%
Here there exist a finite number of indexes $i$ such that 
\begin{equation*}
X_{T_{i}}\left( w\right) \leq x<X_{T_{i}-}\left( w\right) .
\end{equation*}%
For large enough $n$ we have%
\begin{equation*}
X_{T_{i}-}\left( \omega \right) \notin \left( x-1/n,x+1/n\right) ,\quad
i=0,1,...,k.
\end{equation*}%
Therefore 
\begin{align*}
& \lim_{n\rightarrow \infty }\sum_{0<s\leq t}(\varphi _{x,n})^{\prime
}(X_{s-}(\omega ))\Delta X_{s}(\omega ) \\
& =\lim_{n\rightarrow \infty }\sum_{i=1}^{k}(\varphi _{x,n})^{\prime
}(X_{T_{i}-}\left( \omega \right) )\Delta X_{T_{i}}\left( \omega \right) \\
& =\lim_{n\rightarrow \infty }\sum_{i=1}^{k}\frac{n}{2}1_{\left(
x-1/n,x+1/n\right) }(X_{T_{i}-}\left( \omega \right) )\Delta X_{T_{i}}\left(
\omega \right) =0.
\end{align*}%
Hence the proof is complete.
\end{proof}

\section{Proof of Theorem \protect\ref{theop}}

\label{sec:3} The purpose of this section is to give the proof of Theorem %
\ref{theop}. This proof will be divided into three steps. It is worth
mentioning that the proof of Statement a) gives us a sequence of absolutely
continuous random fields that converges to $L$. Namely, the sequence $%
\{\int_{0}^{t}\varphi _{x,n}^{\prime }\left( X_{s-}\right) ds,n\in \mathbb{N}%
\}.$

\subsection{Proof of part a) of Theorem \protect\ref{theop}}

From part a) and b) of Proposition \ref{aptlft} we have%
\begin{equation*}
L_{t}(x)=\lim_{n\rightarrow \infty }L_{t}^{n}\left( x\right)
=\lim_{n\rightarrow \infty }\int_{0}^{t}\varphi _{x,n}^{\prime }\left(
X_{s-}\right) ds.
\end{equation*}%
which yields that $(L_{\cdot }^{n}\left( x\right) )$ is non-negative and
increasing.

\subsection{Proof of part b) of Theorem \protect\ref{theop}}

Since $X_{s}\leq X_{s-}$ we have 
\begin{eqnarray}
L_{t}\left( x\right) &=&\frac{1}{c}(\frac{1}{2}1_{\left\{ X_{t}\right\}
}\left( x\right) -\frac{1}{2}1_{\left\{ x_{0}\right\} }\left( x\right)
+1_{(-\infty ,X_{t})}\left( x\right) -1_{(-\infty ,x_{0})}\left( x\right) 
\notag \\
&&+\sum_{0<s\leq t}1_{\left( X_{s},X_{s-}\right) }(x)).  \label{tlpcr2}
\end{eqnarray}
Suppose for example, $x_{0}>x$, $X_{t}<x$ and $C_{t}\left( x\right) =n.$ Let 
$c_{1},...,c_{n}$ the crossing times with the level $x$. Then, by
hypothesis, there exist jumping times $s_{1}\in \left( 0,c_{1}\right) $,...,$%
s_{n+1}\in \left( c_{n},t\right)$ such that $x\in(X_{s_i},X_{s_i-})$. Hence 
\begin{eqnarray*}
1_{\left( -\infty ,X_{t}\right) }\left( x\right) -1_{(-\infty
,x_{0}]}(x)+\sum_{0<s\leq t}1_{\left( X_{s},X_{s-}\right) }(x) &=&0-1+\left(
n+1\right) \\
&=&C_{t}(x).
\end{eqnarray*}

\subsection{Proof of part c) of Theorem \protect\ref{theop}}

For each $a,b\in \mathbb{R}$ define%
\begin{eqnarray*}
1_{\langle \langle a,b\rangle \rangle } &=&\left\{ 
\begin{tabular}{ll}
$1_{(a,b]},$ & if $a\leq b,$ \\ 
$-1_{(b,a]},$ & if $b<a,$%
\end{tabular}%
\right. \\
&=&1_{(-\infty ,b]}-1_{(-\infty ,a]}.
\end{eqnarray*}%
From this definition immediately follows that%
\begin{equation}
1_{\langle \langle a,b\rangle \rangle }=1_{(a,c]}-1_{(b,c]},\quad a,b\leq c.
\label{ppnf}
\end{equation}%
Using induction on $n$, we can prove that, for $a_{1},...,a_{n}$ real
numbers, 
\begin{equation}
1_{\langle \langle a_{1},a_{2}\rangle \rangle }+\cdots +1_{\langle \langle
a_{n-1},a_{n}\rangle \rangle }=1_{\langle \langle a_{1},a_{n}\rangle \rangle
}.  \label{spnf}
\end{equation}

On the other hand, for almost all $\omega \in \Omega ^{\prime }$, there
exists $k\in \mathbb{N}\cup \{0\}$ such that $\omega \in \{N_{t}=k\}$.
Therefore, 
\begin{eqnarray*}
\int_{0}^{t}g(X_{s})ds
&=&\sum_{i=1}^{k}\int_{(T_{i-1},T_{i}]}g(X_{s})ds+\int_{(T_{k},t]}g(X_{s})ds
\\
&=&\sum_{i=1}^{k}\int_{(T_{i-1},T_{i}]}g(X_{T_{i-1}}+c(s-T_{i-1}))ds \\
&&+\int_{(T_{k},t]}g(X_{T_{k}}+c(s-T_{k}))ds.
\end{eqnarray*}

Taking $x=X_{T_{i-1}}+(s-T_{i-1})c,$ we can write 
\begin{eqnarray*}
\int_{0}^{t}g(X_{s})ds
&=&\sum_{i=1}^{k}\int_{(X_{T_{i-1}},X_{T_{i-1}}+c(T_{i}-T_{i-1})]}g(x)\frac{%
dx}{c} \\
&&+\int_{(X_{T_{k}},X_{T_{k}}+c(t-T_{k})]}g(x)\frac{dx}{c} \\
&=&\sum_{i=1}^{k}\int_{(X_{T_{i-1}},X_{T_{i}-}]}g(x)\frac{dx}{c}%
+\int_{(X_{T_{k}},X_{t}]}g(x)\frac{dx}{c} \\
&=&\frac{1}{c}\int_{\mathbb{R}}
g(x)\sum_{i=1}^{k}1_{(X_{T_{i-1}},X_{T_{i}-}]}(x)dx \\
&&+\frac{1}{c}\int_{\mathbb{R}} g(x)1_{(X_{T_{k}},X_{t}]}(x)dx.
\end{eqnarray*}%
From (\ref{ppnf}) and (\ref{spnf}) we have%
\begin{eqnarray*}
\int_{0}^{t}g(X_{s})ds &=&\frac{1}{c}\int_{\mathbb{R}}g(x)%
\sum_{i=1}^{k}1_{(X_{T_{i}},X_{T_{i}-}]}(x)dx \\
&&+\frac{1}{c}\int_{\mathbb{R}}g(x)\sum_{i=1}^{k}1_{\langle \langle
X_{T_{i-1}},X_{T_{i}}\rangle \rangle }(x)dx \\
&&+\frac{1}{c}\int_{\mathbb{R}}g(x)1_{(X_{T_{k}},X_{t}]}(x)dx \\
&=&\frac{1}{c}\int_{\mathbb{R}}g(x)%
\sum_{i=1}^{k}1_{(X_{T_{i}},X_{T_{i}-}]}(x)dx \\
&&+\frac{1}{c}\int_{\mathbb{R}}g(x)1_{\langle \langle
X_{T_{0}},X_{T_{k}}\rangle \rangle }(x)dx \\
&&+\frac{1}{c}\int_{\mathbb{R}}g(x)1_{(X_{T_{k}},X_{t}]}(x)dx \\
&=&\frac{1}{c}\int_{\mathbb{R}}(1_{\langle \langle X_{0},X_{t}\rangle
\rangle }+\sum_{i=1}^{k}1_{(X_{T_{i}},X_{T_{i}-}]})(x)g(x)dx \\
&=&\int_{\mathbb{R}}\frac{1}{c}(1_{(-\infty ,X_{t}]}-1_{(-\infty
,X_{0}]}+\sum_{i=1}^{k}1_{(X_{T_{i}},X_{T_{i}-}]})(x)g(x)dx.
\end{eqnarray*}%
Thus, the proof is complete by (\ref{tlpcr2}).

\section{An occupation measure result}

\label{sec:4}

By $F(\cdot ,t)$ we denote the distribution of $%
\sum_{k=1}^{N_{t}}R_{k}1_{[N_{t}>0]}$, and by $f(\cdot ,t)$ the density of $%
F(\cdot ,t),$ when it exists. In order to use Theorem \ref{teomed2} we need
an expression for $E[L_{t}(x)]$, which is given in \cite{T-J-J} (Proposition
1). Namely, if $f\in L^{1}(\mathbb{R}\times \lbrack 0,t])$, then%
\begin{equation}
E[L_{t}(x)]=\int_{[((x-x_{0})/c)\vee 0]\wedge t}^{t}f(x_{0}+cs-x,s)ds.
\label{exprelt}
\end{equation}

\subsection{Example}

Consider the measurable set%
\begin{equation*}
\Delta =[0,\infty )\times \lbrack 0,\infty )\in \mathcal{B}(\mathbb{R}^2).
\end{equation*}%
Then, from Theorem \ref{teomed2} and (\ref{exprelt}), we get%
\begin{align*}
& E[\int_{0}^{t}1_{\Delta }(X_{s},X_{s+\varepsilon }-X_{s})ds] \\
& =\int_{\mathbb{R}}E[1_{\Delta }(x,X_{\varepsilon
}-x_{0})]\int_{[((x-x_{0})/c)\vee 0]\wedge t}^{t}f(x_{0}+cs-x,s)dsdx \\
& =\int_{0}^{\infty }P(x_{0}\leq X_{\varepsilon })\int_{[((x-x_{0})/c)\vee
0]\wedge t}^{t}f(x_{0}+cs-x,s)dsdx \\
& =\int_{0}^{\infty }P(\sum_{k=1}^{N_{\varepsilon }}R_{k}\leq c\varepsilon
)\int_{[((x-x_{0})/c)\vee 0]\wedge t}^{t}f(x_{0}+cs-x,s)dsdx \\
& =\int_{0}^{\infty }F(c\varepsilon ,\varepsilon )\int_{[((x-x_{0})/c)\vee
0]\wedge t}^{t}f(x_{0}+cs-x,s)dsdx.
\end{align*}

Now assume that $R_{1}$ has exponential distribution with parameter $\beta $%
, then the density of $\sum_{k=1}^{N_{t}}R_{k}1_{[N_{t}>0]}$ is 
\begin{equation*}
f(x,t)=e^{-\alpha t-\beta x}\left( \sum_{n=1}^{\infty }\frac{(\beta \alpha
t)^{n}x^{n-1}}{n!(n-1)!}\right) 1_{(0,\infty )}(x),\quad t>0.
\end{equation*}%
Hence, in this case,%
\begin{align*}
& E[\int_{0}^{t}1_{\Delta }(X_{s},X_{s+\varepsilon }-X_{s})ds] \\
& =\int_{0}^{\infty }\left[ \int_{0}^{c\varepsilon }e^{-\alpha \varepsilon
-\beta y}\sum_{n=1}^{\infty }\frac{(\beta \alpha \varepsilon )^{n}y^{n-1}}{%
n!(n-1)!}dy+e^{-\alpha \varepsilon }\right] \\
& \times \int_{\lbrack ((x-x_{0})/c)\vee 0]\wedge t}^{t}e^{-\alpha
s}e^{-\beta (x_{0}+cs-x)}\sum_{k=1}^{\infty }\frac{(\beta \alpha
s)^{k}(x_{0}+cs-x)^{k-1}}{k!(k-1)!}dsdx \\
& =\int_{0}^{x_{0}}\left[ \int_{0}^{c\varepsilon }e^{-\alpha \varepsilon
-\beta y}\sum_{n=1}^{\infty }\frac{(\beta \alpha \varepsilon )^{n}y^{n-1}}{%
n!(n-1)!}dy+e^{-\alpha \varepsilon }\right] \\
& \times \int_{0}^{t}e^{-\alpha s}e^{-\beta (x_{0}+cs-x)}\sum_{k=1}^{\infty }%
\frac{(\beta \alpha s)^{k}(x_{0}+cs-x)^{k-1}}{k!(k-1)!}dsdx \\
& +\int_{x_{0}}^{x_{0}+ct}\left[ \int_{0}^{c\varepsilon }e^{-\alpha
\varepsilon -\beta y}\sum_{n=1}^{\infty }\frac{(\beta \alpha \varepsilon
)^{n}y^{n-1}}{n!(n-1)!}dy+e^{-\alpha \varepsilon }\right] \\
& \times \int_{(x-x_{0})/c}^{t}e^{-\alpha s}e^{-\beta
(x_{0}+cs-x)}\sum_{k=1}^{\infty }\frac{(\beta \alpha s)^{k}(x_{0}+cs-x)^{k-1}%
}{k!(k-1)!}dsdx.
\end{align*}%
For example, under the conditions%
\begin{equation*}
x_{0}=4,\ \alpha =1,\ \beta =1,\ c=1.1,\ t=1,
\end{equation*}%
with $\varepsilon =12$ and considering five iterations on the sums we get%
\begin{align*}
& E[\int_{0}^{1}1_{\Delta }(X_{s},X_{s+12}-X_{s})ds] \\
& \approx \int_{0}^{4}\left[ \int_{0}^{13.2}e^{-12-y}\sum_{n=1}^{5}\frac{%
(12)^{n}y^{n-1}}{n!(n-1)!}dy+e^{-12}\right] \\
& \times \int_{0}^{1}e^{-(2.1)s-4+x}\sum_{k=1}^{5}\frac{%
s^{k}(4+(1.1)s-x)^{k-1}}{k!(k-1)!}dsdx \\
& +\int_{4}^{5.1}\left[ \int_{0}^{13.2}e^{-12-y}\sum_{n=1}^{5}\frac{%
(12)^{n}y^{n-1}}{n!(n-1)!}dy+e^{-12}\right] \\
& \times \int_{(x-4)/(1.1)}^{1}e^{-(2.1)s-4+x}\sum_{k=1}^{5}\frac{%
s^{k}(4+(1.1)s-x)^{k-1}}{k!(k-1)!}dsdx \\
& =7.251\times 10^{-3}.
\end{align*}%
Note that this value may help the insurance company to decide if it invests
part of its wealth in another assets.

\subsection{Proof of Theorem \protect\ref{teomed2}}

We will use the monotone class theorem (see, for example, Ethier and Kurtz 
\cite{E-K}, Theorem 4.2) to show that the result holds. Set%
\begin{equation*}
\mathcal{H}=\{\psi :\mathbb{R}^{2}\longrightarrow \mathbb{R},\ \psi \text{
is measurable, bounded and satisfies }(\ref{cm2})\}.
\end{equation*}%
It is not difficult to see that $\mathcal{H}$ is a real linear space and, by
Theorem \ref{theop}, we have%
\begin{equation*}
\int_{\mathbb{R}}E[L_{t}(x)]dx=E[\int_{\mathbb{R}}L_{t}(x)dx]=E[%
\int_{0}^{t}1_{\mathbb{R}}(X_{s})ds]=t.
\end{equation*}%
It means, $1_{\mathbb{R}^{2}}\in \mathcal{H}$. Moreover $\mathcal{H}$ is
closed under monotone convergence: Let $(\psi _{n})\subset \mathcal{H}$,
such that $0\leq \psi _{n}\uparrow \psi $, $\psi $ bounded, then $\psi $ is
measurable and 
\begin{eqnarray*}
E[\int_{0}^{t}\psi (X_{s},X_{s+\varepsilon }-X_{s})ds] &=&\lim_{n\rightarrow
\infty }E[\int_{0}^{t}\psi _{n}(X_{s},X_{s+\varepsilon }-X_{s})ds] \\
&=&\lim_{n\rightarrow \infty }\int_{\mathbb{R}}E[\psi _{n}(x,X_{\varepsilon
}-x_{0})]E[L_{t}(x)]dx \\
&=&\int_{\mathbb{R}}E[\psi (x,X_{\varepsilon }-x_{0})]E[L_{t}(x)]dx,
\end{eqnarray*}%
which gives that $\psi \in \mathcal{H}$.

Now we use the notation%
\begin{equation*}
\mathcal{K}=\{\psi :\mathbb{R}^{2}\longrightarrow \mathbb{R},\ \psi (\cdot
,\cdot \cdot )=1_{A}(\cdot )1_{B}(\cdot \cdot ),\ \ A,B\in \mathcal{B}(%
\mathbb{R})\}.
\end{equation*}%
Then the family $\mathcal{K}$ is closed under multiplication and $\mathcal{K}%
\subset \mathcal{H}$. In fact, by Theorem \ref{theop} we obtain%
\begin{eqnarray*}
\lefteqn{E[\int_{0}^{t}1_{A}(X_{s})1_{B}(X_{s+\varepsilon }-X_{s})ds]} \\
&=&\int_{0}^{t}E[1_{A}(X_{s})]E[1_{B}(X_{s+\varepsilon }-X_{s})]ds \\
&=&\int_{0}^{t}E[1_{A}(X_{s})]E[1_{B}(\varepsilon
c-\sum_{k=N_{s}+1}^{N_{s+\varepsilon }}R_{k})]ds \\
&=&\int_{0}^{t}E[1_{A}(X_{s})]E[1_{B}(\varepsilon
c-\sum_{k=1}^{N_{\varepsilon }}R_{k})]ds \\
&=&\int_{0}^{t}E[1_{A}(X_{s})]E[1_{B}(X_{\varepsilon }-x_{0})]ds \\
&=&E[1_{B}(X_{\varepsilon }-x_{0})]\int_{0}^{t}E[1_{A}(X_{s})]ds \\
&=&E[1_{B}(X_{\varepsilon }-x_{0})]E[\int_{\mathbb{R}}1_{A}(x)L_{t}(x)dx] \\
&=&\int_{\mathbb{R}}E[1_{A}(x)1_{B}(X_{\varepsilon }-x_{0})]E[L_{t}(x)]dx.
\end{eqnarray*}%
Finally, the Dynkin monotone class theorem yields that the proof is finished.

\bigskip

\noindent \textsc{Acknowledgement.} \textit{The last two authors would like
to thank Cinvestav-IPN and Universidad Aut\'onoma de Aguascalientes for
their hospitality during the realization of this work.}

\end{document}